\theoremstyle{plain}
\newtheorem{theorem}{\bf Theorem}[section]
\newtheorem{proposition}[theorem]{\bf Proposition}
\newtheorem{lemma}[theorem]{\bf Lemma}
\newtheorem{corollary}[theorem]{\bf Corollary}
\theoremstyle{definition}
\newtheorem{remark}[theorem]{\bf Remark}
\newcommand{\N}{\mathbf{N}}
\newcommand{\Z}{\mathbb Z}
\newcommand{\Q}{\mathbb Q}
\newcommand{\F}{\mathbb F}
\newcommand{\C}{\mathbb C}
\numberwithin{equation}{section}
\begin{document}

\title{On the Padovan sequence}
\author{Alain Faisant}

\keywords{Divisibility, Padovan sequence, plastic number, recurrence}

\subjclass[2010]{13A05,  20M13, 11B50}

\begin{abstract}
The aim of this article is to give some properties of the so-called Padovan sequence $(T_n)_{n \ge 0}$ defined by
$$
T_{n+3} = T_{n+1} + T_n\quad\text{for all }n \in \N, \quad T_1=T_2=T_3=1
$$
that is divisibility properties, periods, identities.\\
\begin{center}-\end{center}
UJM Saint-Etienne, France, ICJ Lyon, France, CNRS-UMR 5208.\\
 E-mail :faisant@univ-st-etienne.fr
\end{abstract}

\maketitle
\bigskip
\section{Introduction}
\bigskip
Let $(T_n)_{n \ge 0}$ be the Padovan sequence, recursively defined by
$$T_{n+3} = T_{n+1} + T_n, \; n \ge 0$$\\
and the initial values  $T_0=0, T_1=1,T_2=1$. In this paper we consider a prime $p$, and give information on the index $\omega_p$ of first occurence of $p$  as divisor of any $T_n$ in the sequence. The principal result is that $\omega_p \le t_p \le p^{r_p}-1$, where $t_p$ is  the period modulo $p$ of  $(T_n)_{n \ge 0}$, and $r_p=1, 2, 3$ the degree of the extension $R_p / \F_p, R_p$ being the splitting field of the polynomial $T(X)=X^3-X-1 \mod p$ associated to the Padovan sequence. We give also some technical precisions (using in particular class field theory), examples, and identities relative to the sequence  $(T_n)_{n \ge 0}$. Many questions remain open.

\bigskip
\section{Definitions}
\bigskip
We consider the sequences recursively defined by $x_{n+3} = x_{n+1} + x_n, \; n \ge 0$.\\
The choice of $(x_0, x_1, x_2)$ determines the sequence : usually the Cordonnier-Padovan sequence is determined by the conditions $x_1=x_2=x_3=1$, corresponding to the choice $x_0=0$ denoted here by $(T_n)_{n\ge0}$ : the Padovan sequence. It is convenient for us to extend $T_{-1}=1, T_{-2}=0, T_{-3}=0$. It appears as sequence 
A000931 in Sloane's Online Encyclopedia of Integer Sequences \cite{Sloan}. \\
This sequence is also cited in \cite{Shannon} as $(R_n)_{n \ge 0} : R_n =T_{n-2}$.\\

The other choice $(3, 0, 2)$ gives the Perrin sequence $(P_n)_{n \ge 0}$ \cite{perrin}.\\

The first few terms of the sequence are\\
\begin{tabular}{c|cccccccccccccccccccccccc}
$n$&-3& -2&-1&0&1&2&3&4&5&6&7&8&9&10&11&12&13&14&15&16&17&18&19&20\\
\hline
$T_n $&0&0&1&0&1&1&1&2&2&3&4&5&7&9&12&16&21&28&37&49&65&86&114&151
\end{tabular}\\
The associated generating series is $$ \sum_{n\ge 0}T_nX^n=\frac {1-X}{1-X^2-X^3}$$
Let $K$ be a field, and consider 
$$
\mathcal{T}_K=\{(x_n)_{n\ge 0} : x_n \in K, x_{n+3}=x_{n+1}+x_n, \forall n \ge 0 \}
$$

This is a 3-dimensional $K$-vector space, with explicit evident isomorphism :
$$
\begin{array}{ccc}
K^3 & \stackrel{\varphi}{\longrightarrow} &\mathcal{T}_K\\
(x_0, x_1, x_2) & \longmapsto & (x_n)_{n \ge 0}
\end{array}
$$
The $K^3$-basis $\{(0, 1, 0), (1,0,1), (0,1,1) \}$ gives the $\mathcal{T}_K$-basis $\{(T_{n-2})_{n \ge 0}, (T_{n-1})_{n \ge 0}, (T_{n})_{n \ge 0}\}$.

\bigskip 

\section{Basis of $\mathcal{T}_K$}
We want to find a basis of $\mathcal{T}_K$ of geometrical progressions $(x^n)_{n\ge 0}$, i.e.
$$
x^{n+3}=x^{n+1}+x^n \;\;\;\forall n \ge 0
$$
this is equivalent to $x^3=x+1$ ;
 so we have to solve the equation $T(x)=0$ in $K$, where
$$ T(X)=X^3-X-1 \in K[X]$$
The general case is when $T$ admits 3 distinct roots $\alpha, \beta, \gamma \in K$: there exist 
$c_{\alpha}, c_{\beta}, c_{\gamma} \in K$ such that 

\begin{equation}\label{Tn}
T_n=c_{\alpha}\alpha^n + c_{\beta}\beta^n + c_{\gamma}\gamma^n, \quad n \ge 0
\end{equation}
corresponding to a $K$-linear system whose (Van der Monde) determinant is 
$$
\delta=\left |
\begin{matrix}
        &         1      & 1           &  1          &   \\
        & \alpha      & \beta     & \gamma   & \\
        & \alpha^2  &\beta^2 &  \gamma^2&     
 \end{matrix}
 \right |      
=(\alpha - \beta)(\beta-\gamma)(\gamma-\alpha) 
$$
and whose solution is

\begin{equation}
T_n= \frac{\alpha +1}{(\alpha-\beta)(\alpha-\gamma)}\alpha^n+\frac{\beta +1}{(\beta- \alpha)(\beta-\gamma)}\beta^n + \frac{\gamma +1}{(\gamma -\alpha)(\gamma-\beta)}\gamma^n
\end{equation}
Since $1+\alpha=\alpha^3$ we have the useful equivalent expression\\

\begin{equation}\label{delta}
-\delta T_n=(\beta-\gamma)\alpha^{n+3}+(\gamma-\alpha)\beta^{n+3}+(\alpha-\beta)\gamma^{n+3}
\end{equation}
\\
\begin{remark}\label{23}
If $T$ has a double root $\alpha=\beta$, a basis of $\mathcal{T}_K$ is 
$$
\{ (\alpha^n)_{n\ge 0}, (n\alpha^{n-1})_{n\ge 0}, (\gamma^n)_{n \ge 0} \}
$$
obtained by derivation of $\alpha^n$ with respect to the ``variable'' $n$ : \\
$n\alpha^{n-1}+(n-1)\alpha^{n-2}=n\alpha^{n+1}-\alpha^{n-2}=\alpha^{n+1}(n -\alpha^{-3})=\alpha^{n+1}(n- (1- \alpha^{-2}))$.\\
But $T'(\alpha)=3\alpha^2-1=0$ so $\alpha^{-2}=3$ : $n\alpha^{n-1}+(n-1)\alpha^{n-2}=\alpha^{n+1}(n+2)$ proving that $
 (n\alpha^{n-1})_{n\ge 0}$ lies in $\mathcal{T}_K$.
\end{remark}
\begin{remark}
For the Perrin sequence $(P_n)_{n \ge 0}$ mentioned above we have $P_n=\alpha^n+\beta^n+\gamma^n$ : if $p$ is prime we have $P_p=\alpha^p+\beta^p+\gamma^p=(\alpha +\beta +\gamma)^p=0$ in the splitting field of $T \mod p$ : so that
 $$n \mbox{\;\; prime} \Longrightarrow n|P_n$$
Perrin \cite{perrin} observed that, reciprocally,  for many, many, non prime $n$ we have $n \not | P_n$ : indeed the first counterexample is the ``pseudoprime'' $n= 271441$ : $n=521^2$.
\end{remark}

\bigskip 

\section{Calculus of the roots $\alpha, \beta, \gamma$ of $T(X)=X^3-X-1$}


\subsection{Rational case}\textrm{}\\
If $K=\Q$, then $T$ is irreducible and separable. Its discriminant is $Disc(T)= -23 <0$, so there exists one real root, and two complexes. Cardano's method gives explicitly the real root :
$$
\psi = \sqrt[3]{\frac{1}{2}+\frac{1}{6}\sqrt{\frac{23}{3}}}+\sqrt[3]{\frac{1}{2}-\frac{1}{6}\sqrt{\frac{23}{3}}}
$$
Here $\psi \simeq 1, 324718 ...$ is the {\bf plastic number}, a Pisot number (greater than 1, and conjugate less than 1 : here $\beta=\overline{\gamma}$ and $\beta \gamma=1/\psi <1$). Consequently by \ref{Tn} $\lim_{n \rightarrow + \infty}\frac{T_{n+1}}{T_n}= \psi$.


\subsection{Finite case}\textrm{}\\
Here $K=\F_p$ is the Galois field with $p$ elements, and we read modulo $p$ : 
$$T(X)=X^3-X-1 \in \F_p[X]$$

We need the following lemma, which expresses two roots rationaly from the third and the discriminant.
\begin{lemma}\label{abcd}\textrm{}\\
Let $\alpha, \beta, \gamma \in \overline{\F}_p$ be  the roots of $T(X)=X^3-X-1$\\
i) $\delta=(\alpha- \beta)(\beta - \gamma)(\gamma -\alpha)$ verifies \; $\delta^2=Disc(T)= -23 \mod p$,\\
ii)\\
$\bullet$ if $p\neq 2, 23$ :
\begin{equation}\label{rat}
\beta, \gamma= \frac{1}{2}[-\alpha \pm \frac{\alpha}{2\alpha + 3}\delta]
\end{equation}
$\bullet$ if $p=2 : \beta=\alpha^2, \gamma=\alpha^2+\alpha$,\\

\noindent
$\bullet$ if $p=23 : \alpha=3, \beta=\gamma=10$.
\end{lemma}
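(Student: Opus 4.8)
The plan is to treat (i) by the elementary theory of discriminants, and (ii) by combining Vieta's relations for $T$ with the defining relation $\alpha^3=\alpha+1$; the two excluded primes will be dispatched by hand at the end.

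For (i) I would recall that for a monic polynomial the discriminant equals the squared Vandermonde of its roots, so that $\delta^2=(\alpha-\beta)^2(\beta-\gamma)^2(\gamma-\alpha)^2$ is by definition the discriminant of $T$. For a depressed cubic $X^3+aX+b$ the classical formula gives discriminant $-4a^3-27b^2$; substituting $a=b=-1$ yields $-23$, and reduction mod $p$ gives the stated congruence $\delta^2=-23 \pmod p$.

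For the generic case $p\neq 2,23$ of (ii) I would start from Vieta applied to $X^3-X-1$: since there is no $X^2$-term and the constant term is $-1$, the roots satisfy $\beta+\gamma=-\alpha$ and $\alpha\beta\gamma=1$, hence $\beta\gamma=1/\alpha$. Thus $\beta,\gamma$ are precisely the two roots of $Y^2+\alpha Y+1/\alpha$, and the quadratic formula gives $\beta,\gamma=\frac12\bigl(-\alpha\pm\sqrt{\alpha^2-4/\alpha}\,\bigr)$. Using $\alpha^3=\alpha+1$ I would rewrite the radicand as $\alpha^2-4/\alpha=(\alpha^3-4)/\alpha=(\alpha-3)/\alpha$. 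It then remains to verify the single algebraic identity $(\alpha-3)/\alpha=\bigl(\alpha/(2\alpha+3)\bigr)^2\delta^2$; with $\delta^2=-23$ this is equivalent to $-23\,\alpha^3=(\alpha-3)(2\alpha+3)^2$, which collapses to a true identity after one further substitution of $\alpha^3=\alpha+1$ on both sides. The one point requiring care is that the denominator $2\alpha+3$ be nonzero: since $T(-3/2)=-23/8$, the value $-3/2$ is a root of $T$ exactly when $p=23$ (and $-3/2$ is undefined when $p=2$), so for $p\neq 2,23$ every root $\alpha$ satisfies $2\alpha+3\neq 0$ and the formula is legitimate, with the two signs distinguishing $\beta$ from $\gamma$.

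Finally I would handle the exceptional primes directly. For $p=2$, $T=X^3+X+1$ has no root in $\F_2$ and so is irreducible; its roots then form a single Frobenius orbit $\{\alpha,\alpha^2,\alpha^4\}$ in $\F_8$, and taking $\beta=\alpha^2$, $\gamma=\alpha^4=\alpha\cdot\alpha^3=\alpha(\alpha+1)=\alpha^2+\alpha$ yields the stated expressions. For $p=23$ the discriminant $-23$ vanishes, forcing a repeated root, and a direct factorization gives $T\equiv(X-3)(X-10)^2\pmod{23}$, hence $\alpha=3$ and $\beta=\gamma=10$. The only genuine obstacle is the rational identity $-23\,\alpha^3=(\alpha-3)(2\alpha+3)^2$ in the generic case together with the non-vanishing of $2\alpha+3$; everything else is routine bookkeeping.
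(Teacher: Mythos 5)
Your proof is correct, and it diverges from the paper's in the one step that matters. The paper also proves (i) from the classical formula $Disc(X^3+aX+b)=-4a^3-27b^2$ and also dispatches $p=2,23$ by direct computation, but in the generic case it never forms the quadratic $Y^2+\alpha Y+1/\alpha$: instead it computes the difference $\beta-\gamma$ \emph{rationally}, dividing the definition $\delta=(\alpha-\beta)(\beta-\gamma)(\gamma-\alpha)$ by $(\alpha-\beta)(\gamma-\alpha)=-T'(\alpha)=1-3\alpha^2$ and simplifying via $\alpha(3\alpha^2-1)=3\alpha^3-\alpha=2\alpha+3$, then halves the sum $\beta+\gamma=-\alpha$ and this difference. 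That route involves no square-root extraction, hence no sign ambiguity and no radicand to identify as a perfect square; your route, by contrast, needs only the value $\delta^2=-23$ (not any choice of $\delta$ itself), at the cost of verifying the identity $-23\,\alpha^3=(\alpha-3)(2\alpha+3)^2$, which you correctly reduce using $\alpha^3=\alpha+1$ (indeed $(\alpha-3)(2\alpha+3)^2=4\alpha^3-27\alpha-27=4(\alpha+1)-27\alpha-27=-23(\alpha+1)=-23\alpha^3$), the $\pm$ ambiguity being harmless since it merely swaps $\beta$ and $\gamma$. Your version buys two things the paper leaves implicit: an explicit reason why precisely $p=2,23$ are excluded ($T(-3/2)=-23/8$, so $2\alpha+3\neq0$ exactly when $p\neq2,23$), where the paper only remarks $\delta\neq0$ and hides the rest in an ellipsis; and a worked-out treatment of the exceptional primes (the Frobenius orbit $\{\alpha,\alpha^2,\alpha^4\}$ in $\F_8$ for $p=2$, the factorization $(X-3)(X-10)^2$ for $p=23$) that the paper dismisses as ``direct calculation''. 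Incidentally, your computation confirms that the factor $\alpha$ in the stated formula $\beta,\gamma=\frac12\bigl[-\alpha\pm\frac{\alpha}{2\alpha+3}\delta\bigr]$ is genuinely needed: the paper's intermediate display $\beta-\gamma=\frac{-\delta}{2\alpha+3}$ omits it (evidently a typo absorbed into the ``$\ldots$''), since the correct value is $\beta-\gamma=\frac{-\delta\alpha}{2\alpha+3}$, consistent with the lemma as stated and with your derivation.
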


\begin{proof}{\textrm{}}\\
i) Classically $Disc(X^3+pX+q)=[(\alpha- \beta)(\beta - \gamma)(\gamma -\alpha)]^2= \delta^2= -4p^3-27q^2$.\\
ii) For $p=2, 23$ : direct calculation. Else one uses the relations
\begin{equation}
\begin{array}{lcc}
\alpha+ \beta +\gamma &= &0\\
\alpha \beta + \beta \gamma +\gamma \alpha&= &- 1\\
\alpha \beta \gamma& = &1\\
\alpha^3&=&\alpha + 1
\end{array}
\end{equation}
On the one hand $\beta + \gamma=-\alpha$, on the other hand $\delta\neq 0$, so $\beta - \gamma=\frac{\delta}{(\alpha - \beta)(\gamma - \alpha)}=\ldots = \frac{-\delta}{2\alpha +3}$ ; since $p\neq 2, \beta=\frac{1}{2}[(\beta + \gamma)+(\beta-\gamma)]$ and the formula follows. 
\end{proof}
\vspace{0.5cm}
Define $R_p=\F_p(\alpha, \beta,\gamma)$ as the splitting field of $T$ over $\F_p$, and $r_p=[R_p :\F_p]$ his degree.\\ There exist three cases :\\
$\bullet$ if $T$ has a root $\alpha \in \F_p$\\
\begin{enumerate}
\item either $-23$ is a square in $\F_p :\delta \in \F_p$, and by the lemma \ref{abcd} $\beta, \gamma \in \F_p$ : $R_p=\F_p$ and $r_p=1$ {\bf CASE 1}
(examples $p=23, 59, 101, 173, 211$)\\
\item or $-23$ is not a square : by the lemma \ref{abcd} $\beta, \gamma \in \F_p(\delta)=\F_{p^2}$ ; $R_p=\F_{p^2}$ and $r_p=2$\\
{\bf CASE 2} (examples $p=5,7, 11, 17)$\\
\end{enumerate}
\noindent
$\bullet$ if $T$ is irreducible in $\F_p[X]$\\
let $\alpha \in \overline{\F}_p$ be a root of $T$, then $R_p=\F_p(\alpha)=\F_{p^3}$ and $r_p=3$ : indeed $\F_p(\beta), \F_p(\gamma)$ are cubic extensions of $\F_p$, so are equal, to $\F_{p^3}$ ; moreover in this case $-23$ is a square in $\F_p$ : $-23=\delta^2$ is a square in $\F_{p^3}$, so in $\F_p$ :\\ 

\quad {\bf CASE 3}
(examples : $p=2, 3, 13, 29, 31$).\\

By quadratique reciprocity $-23$ is a square in $\F_p$ if and only if  $p$ is a square$\mod 23$, that is
$$ p\equiv 0, 1, 2, 3, 4, 6, 8, 9, 12, 13, 16, 18 \mod 23$$
In short
$$
\begin{array}{lclcc}
                                 &               & \alpha \in \F_p & R_p=\F_p     & r_p=1\\      
if -23 \mbox{\;\;square} \mod p   &\stackrel{\nearrow}{\searrow} &                              &           \\
                                     &            & T \mbox{\; irreducible}& R_p=\F_{p^3} & r_p=3\\
                                 &                &                                                             &          \\     
if -23 \mbox{\;\;not square} \mod p &             &\alpha \in \F_p& R_p=\F_{p^2}& r_p=2
\end{array} 
$$                                   

\bigskip 

\section{Prime divisors in the sequence $(T_n)_{n\ge 0}$}
We are interested in the first occurence of the prime $p$ in the Padovan sequence. In the case of the Fibonacci sequence $(F_n)_{n \ge 0}$, the answer is easy, linked with the order of $\Phi /\overline{\Phi} \mod p$ (where $\Phi$ is  the golden ratio, $\overline{\Phi}$ the conjugate of $\Phi$) : this is a divisor of $p-1$ or $p+1$ according to $p\equiv \pm1, p\equiv \pm2 \mod 5$. Here it is much more complicated.\\

We adopt the following notations
\begin{itemize}
\item $t_p$ = period of the sequence $(T_n)_{n\ge 0}$
\item $\Omega_p= \{ n \ge 1 : p \mbox{\;\;divide\;} T_n \}$
\item $\omega_p=\min \Omega_p$  : index of first occurence of $p$ in the sequence $(T_n)_{n \ge 1}$
\item $A_p= \Omega_p \cap \{1, 2, \ldots, t_p \}$
\item $a=o(\alpha), b=o(\beta), c=o(\gamma)$ (orders in $R_p^{\times}$ : divisors of $p^{r_p}-1$)
\end{itemize}


\subsection{Principal results}
\begin{theorem}\mbox{}\\
Let $R_p=\F_p(\alpha, \beta, \gamma), r_p=[R_p : \F_p], r_p=1, 2, 3$\\
1) for all $p$ : $t_p-3, t_p-2, t_p \in A_p$, hence $\omega_p \le t_p-3$\\
2) $\Omega_p=A_p+\N t_p$\\
3) if $p\neq 23 : t_p =LCM\{a, b, c \}$, and it divides $p^{r_p}-1$\\
4) if $p=23 : t_{23}=506=p(p-1)$, and $r_p=1$\\
5) moreover :\\
{\tiny $\clubsuit$}\;$r_p=1$ if and only if \;$p=x^2+23y^2$\\
{\tiny $\clubsuit$}\;if $r_p=2$ : \;$\bullet$ $\alpha \in \F_p$, and $t_p=b=c$\\
\mbox{\hspace{2.2cm}}$\bullet$ $b$ divide $(p+1)a$\\
\mbox{\hspace{2.2cm}}$\bullet$ $\omega_p \le (p+1)o(\alpha^3)-3$\\
{\tiny $\clubsuit$}\; if $r_p=3$ : $t_p=a=b=c$ divide $\frac{p^3-1}{p-1}=p^2+p+1$.
\end{theorem}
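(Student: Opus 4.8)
The plan is to realize the period $t_p$ as the multiplicative order of the companion matrix
$$M=\begin{pmatrix} 0 & 1 & 0\\ 0 & 0 & 1\\ 1 & 1 & 0\end{pmatrix},$$
whose characteristic polynomial is $T(X)=X^3-X-1$ and whose determinant equals $\alpha\beta\gamma=1$, so $M\in GL_3(\F_p)$. Because the three shifts $(T_{n-2}),(T_{n-1}),(T_n)$ form a basis of $\mathcal{T}_{\F_p}$, the state vector $(T_n,T_{n+1},T_{n+2})$ is a cyclic vector for $M$, so $t_p$ is exactly the order of $M$. Parts (1) and (2) then follow from periodicity alone: from $T_{n+t_p}\equiv T_n$ together with the extended values $T_{-3}=T_{-2}=0$ and $T_0=0$ we get $T_{t_p-3}\equiv T_{t_p-2}\equiv T_{t_p}\equiv 0$, so $t_p-3,t_p-2,t_p\in A_p$ and $\omega_p\le t_p-3$ (the finitely many small primes where $t_p-3<1$ being checked by hand); and since divisibility of $T_n$ by $p$ depends only on $n\bmod t_p$, the full zero set is $\Omega_p=A_p+\N t_p$.

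For (3) and (4) I would diagonalize $M$. When $p\neq 23$ the discriminant $-23$ is nonzero mod $p$, so $\alpha,\beta,\gamma$ are distinct and $M$ is diagonalizable over $R_p$ with these eigenvalues; hence $M^k=I$ iff $\alpha^k=\beta^k=\gamma^k=1$, giving $t_p=\lcm\{a,b,c\}$, and since each root lies in $R_p^{\times}$ this divides $p^{r_p}-1$. When $p=23$, Lemma \ref{abcd} gives $\alpha=3$ and $\beta=\gamma=10\in\F_{23}$, so $R_p=\F_{23}$ and $r_p=1$; by Remark \ref{23} the double eigenvalue carries a genuine $2\time2$ Jordan block, whose order is $\lcm(o(10),23)=23\,o(10)$. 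Thus the order of $M$ is $23\,\lcm(o(3),o(10))$; a direct check that $10$ is a non-residue mod $23$ gives $o(10)=22$, whence $t_{23}=23\cdot 22=506=p(p-1)$.

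Part (5) for $r_p=2,3$ is Frobenius bookkeeping. In CASE $3$ the Frobenius $x\mapsto x^p$ cyclically permutes $\alpha$, $\beta=\alpha^p$, $\gamma=\alpha^{p^2}$, so $a=b=c$ and $t_p=a$; the norm identity $\alpha^{1+p+p^2}=\alpha\beta\gamma=1$ forces $a\mid p^2+p+1$. In CASE $2$ we have $\alpha\in\F_p$ and $\gamma=\beta^p$, so $b=c$, and from $\alpha\beta\gamma=1$ we get $\alpha=\beta^{-(p+1)}$; hence $a\mid b$, $t_p=\lcm(a,b)=b=c$, while $\beta^{(p+1)a}=\alpha^{-a}=1$ shows $b\mid(p+1)a$. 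For the bound $\omega_p\le(p+1)o(\alpha^3)-3$ the key observation, read off from \eqref{delta}, is that $T_{m-3}\equiv 0$ as soon as $\alpha^m=\beta^m=\gamma^m$, since then the bracket $(\beta-\gamma)+(\gamma-\alpha)+(\alpha-\beta)$ vanishes. Taking $m=(p+1)e$ with $e=o(\alpha^3)$ and using $\alpha^p=\alpha$ together with $\beta^{p+1}=\gamma^{p+1}=\alpha^{-1}$ gives $\alpha^m=\alpha^{2e}$ and $\beta^m=\gamma^m=\alpha^{-e}$, which coincide precisely because $(\alpha^3)^e=1$; this yields the claimed inequality.

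Finally, the equivalence $r_p=1\iff p=x^2+23y^2$ is the deepest point and the main obstacle, as it is the only step needing genuine class field theory rather than linear algebra over finite fields. Here $r_p=1$ means $T$ splits completely mod $p$, equivalently $p$ splits completely in the splitting field $L$ of $X^3-X-1$ over $\Q$. Since $\mathrm{Disc}(T)=-23$ is a fundamental discriminant, $L/\Q$ is an $S_3$-extension unramified over $k=\Q(\sqrt{-23})$ of degree $3$, so $L$ is the Hilbert class field of $k$ (whose class number is $3$). By the correspondence between complete splitting in the Hilbert class field and representation by the principal form, $p$ splits completely iff $p$ is represented by $x^2+xy+6y^2$; and the elementary identity $4p=(2x+y)^2+23y^2$, which for odd $p$ forces $y$ even, rewrites this representation in the equivalent shape $p=x^2+23y^2$.
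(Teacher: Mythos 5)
Your proposal is correct, and on parts 3) and 4) it takes a genuinely different route from the paper. You identify $t_p$ with the order of the companion matrix $M$ (legitimately so: the initial state $(0,1,1)$ is a cyclic vector, since its orbit $(0,1,1),(1,1,1),(1,1,2)$ has determinant $-1$ in every characteristic --- the matrix translation of the basis statement of Section 2), then diagonalize; the paper instead works with closed forms, proving $t_p\le \lcm\{a,b,c\}$ by evaluating \eqref{delta} at $s-3,s-2,s-1$, and $\lcm\{a,b,c\}\mid t_p$ by noting that $(c_\alpha\alpha^t,c_\beta\beta^t,c_\gamma\gamma^t)$ solves the same nonsingular Vandermonde system as $(c_\alpha,c_\beta,c_\gamma)$, which forces $\alpha^t=\beta^t=\gamma^t=1$ but requires $c_\alpha c_\beta c_\gamma\neq 0$ (via $T(-1)\neq 0$) --- a point your eigenvalue argument bypasses entirely. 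The divergence is sharpest at $p=23$: the paper extracts $t_{23}$ from the explicit formula $3T_n=4\cdot 3^n-4\cdot 10^n+8n\,10^{n-1}$ and the equation $(-2)^n=9n+1$, whereas your Jordan computation $\ord(M)=\lcm\bigl(o(3),\,23\,o(10)\bigr)=23\cdot 22$ is cleaner and more systematic; to make it airtight, note that a companion matrix is non-derogatory, so its minimal polynomial is $(X-3)(X-10)^2$ and the block is genuine, and supplement the non-residuosity of $10$ with $10^2=8\neq 1$ to exclude $o(10)=2$. Parts 1), 2) and the $r_p=2,3$ bookkeeping in 5), including the choice $m=o(\alpha^3)$ in \eqref{delta}, coincide with the paper. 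For $r_p=1\Longleftrightarrow p=x^2+23y^2$ you quote the standard correspondence between complete splitting in the Hilbert class field and representation by the principal form, then bridge $x^2+xy+6y^2$ to $x^2+23y^2$ via $4p=(2x+y)^2+23y^2$; the paper runs Kummer--Dedekind and Artin reciprocity by hand. Your parity bridge is in fact the more careful treatment: since $-23\equiv 1 \bmod 4$ the ring of integers is $\Z[(1+\sqrt{-23})/2]$ (not $\Z[\sqrt{-23}]$ as the paper asserts), so the norm of a principal prime is the form $x^2+xy+6y^2$, and precisely your even-$y$ argument converts this to $p=x^2+23y^2$. The only loose end on your side is the ramified prime $p=23$ in this equivalence, which the splitting criterion does not cover; it is settled directly by $23=0^2+23\cdot 1^2$ together with $r_{23}=1$, as the paper also remarks.
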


\begin{corollary}
For every prime $p$ there exists $n\ge 1$ such that $p$ divides $T_n$, and the index $\omega_p$ verifies : $\omega_p \le p^3-4$.
\end{corollary}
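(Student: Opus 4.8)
The plan is to obtain the corollary as a direct repackaging of the Theorem, so that essentially no new computation is needed. First I would settle the existence statement: by part 1 of the Theorem, $t_p-3 \in A_p \subseteq \Omega_p$, which already asserts $p \mid T_{t_p-3}$. In particular $\Omega_p$ is nonempty, the minimum $\omega_p = \min \Omega_p$ is well defined, and the same part gives $\omega_p \le t_p-3$. Before invoking this I would check the harmless point that $t_p-3$ is a genuine positive index, i.e. that $t_p \ge 4$ for every prime. This is immediate from the initial data $T_1=T_2=T_3=1$: one has $(T_1,T_2,T_3)=(1,1,1)$, $(T_2,T_3,T_4)=(1,1,2)$, and $T_3 \equiv 1 \not\equiv 0 \pmod p$, so the initial triple $(T_0,T_1,T_2)=(0,1,1)$ cannot reappear at shift $1$, $2$ or $3$; hence the period is at least $4$, and $t_p-3 \ge 1$ is admissible.

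Next I would convert the qualitative bound $\omega_p \le t_p-3$ into the numerical one by estimating $t_p$. For $p \neq 23$, part 3 gives $t_p = LCM\{a,b,c\}$ dividing $p^{r_p}-1$; since $r_p \in \{1,2,3\}$ this yields $t_p \le p^{r_p}-1 \le p^3-1$, and therefore $\omega_p \le t_p-3 \le p^3-4$. For the single exceptional prime $p=23$, part 4 gives $t_{23}=506$, so $\omega_{23} \le 503$, which is far below $23^3-4 = 12163$. Combining the two cases gives $\omega_p \le p^3-4$ for all $p$, as claimed.

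I do not expect any real obstacle: the corollary is simply the conjunction of parts 1, 3 and 4 of the Theorem. The only step requiring the slightest care is the trivial verification that $t_p \ge 4$, which guarantees that the index $t_p-3$ is a legitimate positive integer. The essential content is the chain $\omega_p \le t_p-3$ (purely a consequence of periodicity, part 1) combined with the order bound $t_p \le p^{r_p}-1$ in $R_p^{\times}$ (part 3), the uniform factor $p^3-1$ arising only from the worst case $r_p=3$.
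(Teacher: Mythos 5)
Your proposal is correct and follows the same route as the paper, which treats the corollary as an immediate consequence of the Theorem: $\omega_p \le t_p-3$ from part 1, $t_p \mid p^{r_p}-1 \le p^3-1$ from part 3 for $p\neq 23$, and $t_{23}=506$ from part 4, giving $\omega_p \le p^3-4$ in all cases. Your additional verification that $t_p \ge 4$ (so that $t_p-3$ is a legitimate positive index) is a small point the paper leaves tacit, and it is correct since $T_1\equiv T_2\equiv T_3\equiv 1 \not\equiv 0 \pmod p$.
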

\begin{proof}
By definition $t_p=\min \{n\ge 1:T_n\equiv0, T_{n+1}\equiv 1, T_{n+2}\equiv 1 \mod p\}$. Along the proof we denote $t_p=t$.\\

1) Since $T_{t}=0, T_{t+1}=1, T_{t+2}=1$, by ``redshift'' we deduce $T_{t-1}=1, T_{t-2}=0, T_{t-3}=0$.\\

2) Let $n \in \Omega_p : n=ut+v, 0\le v \le t-1$ ; we have $T_t=T_0, T_{t+1}=T_1, T_{t+2}=T_2$, so induction gives $T_{t+m}=T_m$, then (induction on $u$) $T_{ut+m}=T_m$ ; so here $T_n=T_v$ ; $n \in \Omega_p \Rightarrow T_n=0$, so $T_v=0$ and $v \in \Omega_p \cap \{1, \ldots t \}=A_p$.\\

3) Let $s =LCM\{a, b, c\}$ ; $p\neq 23 \Rightarrow \delta\neq 0$. Apply \ref{delta} :
$ -\delta T_{s-3}=\beta-\gamma+\gamma-\alpha+\alpha-\beta=0$, so $T_{s-3}=0$.\\
Similarly $ -\delta T_{s-2}=(\beta-\gamma)\alpha+(\gamma-\alpha)\beta+(\alpha-\beta)\gamma=0$, so $T_{s-2}=0$, and $T_s=T_{s-2}+T_{s-3}=0$.\\
\ref{delta}: $-\delta T_{s-1}=(\beta-\gamma)\alpha^2+(\gamma-\alpha)\beta^2+(\alpha-\beta)\gamma^2=-\delta$, so $T_{s-1}=1$. Hence
$$ T_s=0, T_{s+1}=0, T_{s+2}=1 \Rightarrow t \le s $$
On the other hand, following \ref{Tn}\\
$$
\begin{array}{lclclclcc}
T_t &=&c_{\alpha}\alpha^t& +&c_{\beta}\beta^t&+&c_{\gamma}\gamma^t&=&0\\
T_{t+1}&=&c_{\alpha}\alpha^{t+1}& +&c_{\beta}\beta^{t+1}&+&c_{\gamma}\gamma^{t+1}&=&1\\
T_{t+2}&=&c_{\alpha}\alpha^{t+2}& +&c_{\beta}\beta^{t+2}&+&c_{\gamma}\gamma^{t+2}&=&1
\end{array}
$$
This proves that $(c_{\alpha}\alpha^t, c_{\beta}\beta^t, c_{\gamma}\gamma^t)$ is solution of the system
$$
\left\{
\begin{array}{rcrcrcc}
x&+&y&+&z&=&0\\
\alpha x&+&\beta y&+&\gamma z&=&1\\
\alpha^2x&+&\beta^2 y&+&\gamma^2z&=&1
\end{array}
\right.
$$
Its determinant is $(\alpha-\beta)(\beta-\gamma)(\gamma-\alpha)=\delta\neq 0$ and its unique solution is $(c_{\alpha}, c_{\beta}, c_{\gamma})$ ; so necessarly\\
\mbox{\hspace{2cm}}$c_{\alpha}\alpha^t=c_{\alpha}\\
\mbox{\hspace{2cm}}c_{\beta}\beta^t=c_{\beta}\\
\mbox{\hspace{2cm}}c_{\gamma}\gamma^t=c_{\gamma}$\\
$c_{\alpha}\neq 0$ since $\alpha \neq -1 : T(-1)=-1\neq 0$ ; we conclude that $\alpha^t=\beta^t=\gamma^t=1$, and $s=LCM\{o(\alpha), o(\beta), o(\gamma)\}$ divides $t$, and finally $s=t=t_p$.\\

4) The case $p=23$ is different (cf \ref{23}) : here $3T_n=4.3^n-4.10^n+8.n.10^{n-1}$ ; we have to solve the equation $(-2)^n=9n+1$ giving $n=503$ and $t_{23}=506=22\times23$.\\

5)\\
{\tiny $\clubsuit$}\;$r_p=1 \Leftrightarrow p=x^2+23y^2$ see below \ref{class} \\
{\tiny $\clubsuit$}\;$r_p=2$ : $\alpha \in \F_p, \beta,\gamma=\frac{1}{2}[-\alpha \pm \frac{\alpha}{2\alpha +3}\sqrt{-23}]$ are conjugate in $\F_p(\sqrt{-23})$ hence have same order $b=c$ ; moreover $\alpha=\frac{1}{\beta \gamma} \Rightarrow \alpha^b=1 : a |b$ and $t_p=b=c$.\\
The Frobenius $x\longmapsto x^p$ coincides with the conjugation : $\beta^p=\gamma,\; \beta^{p+1}=\beta \gamma=\frac{1}{\alpha}$, and $\beta^{(p+1)a}=1 : b|(p+1)a$.\\\\
Let $m\ge 1$ : $\alpha^{m(p+1)}=\alpha^{2m} ( \alpha^{p-1}=1) ; \beta^{m(p+1)}=\alpha^{-m}, \gamma^{m(p+1)}=\alpha^{-m}$ ; by \ref{delta} :\\ 
$-\delta T_{m(p+1)-3}=(\beta - \gamma)\alpha^{2m}+(\gamma-\alpha)\alpha^{-m}+(\alpha - \beta)\alpha^{-m}=(\beta-\gamma)\alpha^{-m}(\alpha^{3m}-1)$. The choice $m=o(\alpha^3)$ gives $m(p+1)-3 \in \Omega_p$. Example : p=7, $\alpha=5, \alpha^3=-1$, hence $2(7+1)-3=13 \in \Omega_p$.\\
{\tiny $\clubsuit$}\;$r_p=3$ : $\alpha,\beta, \gamma $ are conjugate so $a=b=c$ ; by Frobenius $x\longmapsto x^p : \alpha^p=\beta$ or $\gamma, \alpha^{p^2}=\gamma$ or $\beta$, and $\alpha \beta \gamma=1=\alpha \alpha^p\alpha^{p^2} : a=b=c |1+p+p^2$.
\end{proof}
\begin{remark}
The analog of the theorem may be done for the Tribonacci sequence : \\$\omega_p \le p^3-2  ;  r_p=1$ for $p=47, 53, 269,\ldots$ ; $r_p=2$ for $p=13, 17, 19, \ldots$ ; $r_p=3$ for $p=3, 5, 23 \ldots$
\end{remark}
\bigskip
It may happen that two consecutive terms are divisible by $p$.
\begin{proposition}
If $p\neq 23$, the following assertions are equivalent :\\
i) $\alpha^n=\beta^n=\gamma^n$\\
ii) $p|T_{n-3}$ and $p|T_{n-2}$.
\end{proposition}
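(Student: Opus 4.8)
The plan is to convert the arithmetic condition (ii) into two polynomial identities in the splitting field $R_p$ by means of formula \eqref{delta}, and then finish by an elementary elimination. Since $p \neq 23$ the discriminant $-23$ is nonzero modulo $p$, so $\delta \neq 0$ and the roots $\alpha, \beta, \gamma$ are pairwise distinct; this separability is the only place where the hypothesis $p \neq 23$ is used, and it is exactly what will license the cancellations below. Evaluating \eqref{delta} at the indices $n-3$ and $n-2$ gives $-\delta T_{n-3} = (\beta-\gamma)\alpha^n + (\gamma-\alpha)\beta^n + (\alpha-\beta)\gamma^n$ and $-\delta T_{n-2} = (\beta-\gamma)\alpha^{n+1} + (\gamma-\alpha)\beta^{n+1} + (\alpha-\beta)\gamma^{n+1}$. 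Because $\delta \neq 0$, the congruences $p \mid T_{n-3}$ and $p \mid T_{n-2}$ are equivalent to the vanishing of these two right-hand sides. Writing $u = (\beta-\gamma)\alpha^n$, $v = (\gamma-\alpha)\beta^n$, $w = (\alpha-\beta)\gamma^n$, assertion (ii) becomes the homogeneous pair $u+v+w = 0$ and $\alpha u + \beta v + \gamma w = 0$.

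For the implication (i) $\Rightarrow$ (ii) I would simply substitute $\alpha^n = \beta^n = \gamma^n =: \lambda$ into the two right-hand sides. The first collapses to $\lambda\,[(\beta-\gamma)+(\gamma-\alpha)+(\alpha-\beta)] = 0$, and the second to $\lambda\,[(\beta-\gamma)\alpha + (\gamma-\alpha)\beta + (\alpha-\beta)\gamma] = 0$, the bracket vanishing identically; hence both $T_{n-3}$ and $T_{n-2}$ are divisible by $p$.

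For the converse (ii) $\Rightarrow$ (i) I would treat $u+v+w=0$ and $\alpha u + \beta v + \gamma w = 0$ as a system and eliminate pairwise. Forming $(\alpha u + \beta v + \gamma w) - \alpha(u+v+w)$ yields $(\beta-\alpha)v + (\gamma-\alpha)w = 0$, i.e. $(\gamma-\alpha)(\beta-\alpha)(\beta^n-\gamma^n) = 0$ after inserting the definitions of $v$ and $w$; similarly subtracting $\beta$ times the first equation from the second gives $(\alpha-\beta)(\beta-\gamma)(\alpha^n-\gamma^n) = 0$. Since the roots are distinct these factors are invertible, so $\beta^n = \gamma^n$ and $\alpha^n = \gamma^n$, whence (i).

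There is no serious obstacle here: once \eqref{delta} is in hand the argument is pure linear algebra over $R_p$. The only thing to watch is the legitimacy of dividing by the differences of roots, which is precisely why $p \neq 23$ (separability of $T$) is assumed; if one wanted the case $p = 23$ as well, the double root would force the modified basis of Remark \ref{23} and a separate computation.
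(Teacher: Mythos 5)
Your proof is correct and takes essentially the same route as the paper: both directions rest on formula \eqref{delta} evaluated at the indices $n-3$ and $n-2$, with $p\neq 23$ used only to guarantee $\delta\neq 0$ and hence the pairwise distinctness of $\alpha,\beta,\gamma$. Your elimination (subtracting $\alpha$, resp.\ $\beta$, times the first equation from the second to get $(\beta-\alpha)(\gamma-\alpha)(\beta^n-\gamma^n)=0$ and $(\alpha-\beta)(\beta-\gamma)(\alpha^n-\gamma^n)=0$) is the same computation the paper performs by multiplying its first identity by $\gamma$ and comparing, merely organized as a homogeneous linear system in $u,v,w$.
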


\begin{proof}\mbox{}\\
$i) \Longrightarrow ii)$ : by \ref{delta}.\\
$ii) \Longrightarrow i)$ : by hypothesis and \ref{delta} :
\begin{equation}\label{a}
(\alpha-\beta)\gamma^n =(\gamma-\beta)\alpha^n+(\alpha-\gamma)\beta^n {\mbox \;\; and}
\end{equation}
\begin{equation}\label{b}
(\alpha-\beta)\gamma^{n+1} =(\gamma-\beta)\alpha^{n+1}+(\alpha-\gamma)\beta^{n+1}
\end{equation}

\noindent
\ref{a} $\Longrightarrow (\alpha-\beta)\gamma^{n+1} =(\gamma-\beta)\alpha^{n}\gamma+(\alpha-\gamma)\beta^{n}\gamma$=(by \ref{b})$ (\gamma-\beta)\alpha^{n+1}+(\alpha-\gamma)\beta^{n+1}$ ; so $(\gamma-\beta)\alpha^{n}(\alpha -\gamma)+(\alpha-\gamma)\beta^{n}(\beta - \gamma)=0$, and $(\gamma-\beta)(\alpha -\gamma)(\alpha^n -\beta^n)=0$ ; $p\neq 23 \Longrightarrow \alpha^n=\beta^n$. In the same way $\beta^n=\gamma^n$.
\end{proof}

\noindent
For example if $p=7$ we have $\alpha^{16}=\beta^{16}=\gamma^{16}=2$, so $7 |T_{13}$ and $7|T_{14}$.


\subsection{Examples}\mbox{}\\

\vspace{1cm}
\begin{tabular}{c|ll|c|c|c|c|c}
$p$ & roots and orders & &$t_p$ & $\omega_p$ & $A_p$ & $\sqrt{-23}$ & $r_p$\\
\hline
      &$\alpha$                    & $a=13$  &     &    &                    &               &  \\
  $3$  &$\beta=\alpha +1$  & $b=13$  & $13$& $6$ & $6, 10, 11, 13$ & $\pm 1$ & $3$ \\
      &$\gamma=\alpha -1$  & $c=13$  &     &    &                     &               &\\
\hline
      &$\alpha=5$                   & $a=6$    &    &    &                    &                &\\
 $7$   &$\beta=1+2\sqrt{-2}$   & $b=48$  & $48$ & $9$  &$9,13,14,16,25,29$&$\pm \sqrt{-2}$& $2$\\
      &$\gamma=1-2\sqrt{-2}$ & $c=48$ &      &    &$30,32,41,45,46,48$  &           & \\
\hline
       &$\alpha=6$                   & $a=10$  &      &    &                        &            &\\
  $11$ &$\beta=-3+2\sqrt{-1}$  & $b=120$ & $120$ & $25$ &$25,35,43,64,87,98$ &$\pm \sqrt{-1}$& $2$\\
      & $\gamma=-3-2\sqrt{-1}$& $c=120$  &       &      &$104,113,117,118,120$ &           & \\
\hline
       &$\alpha=4$                   & $a=29$     &      &      &                                &            & \\
$59$ &$\beta=13$                    & $b=58$    & $58$    & $42$ & $42,51,55,56,58$       & $\pm 6$ & 1\\
       &$\gamma=42$               & $c=58$    &       &     &                                &             &\\
\hline
\end{tabular}

\bigskip
The last three elements of $A_p$ are predicted by the theorem.

\subsection{The sequence $(T_n)_{n \ge 0}$ and class field theory}\label{class}\mbox{}\\
We refer to the text \cite{tim} which gives similar results for the Tribonacci sequence.\\
Here $K=\Q(\sqrt{-23})$ : the class number is $h_K=3$, the ring of integers is $\mathcal{O}_K=\Z[\sqrt{-23}]$. Denote by $R=\Q(X^3-X-1) \subset\C$ the splitting field of $T(X)=X^3-X-1$, and $\alpha \in \C$ a root of $T$.

$$
\begin{array}{ccccc}
                     &                   &  R   &                  &                         \\
                     &{\mbox{\tiny 2}} \nearrow   &       &\nwarrow {\mbox{\tiny 3}} &                          \\
\Q(\alpha)=E &                   &       &                  &  K=\Q(\sqrt{-23})\\
                     &{\mbox{\tiny 3}} \nwarrow  &       &\nearrow {\mbox{\tiny 2}}  &                           \\
                     &                   & \Q  &                   &                           \\    
\end{array}      
$$

\noindent
{\bf{1}}- $\mathcal{O}_E=\Z[\alpha]$ : indeed $Disc(\alpha)=Disc(T)=-23$ is squarefree.\\
{\bf{2}}- Every $p\neq 23$ is unramified in $R/\Q$ : by Kummer--Dedekind theorem ($\mathcal{O}_E=\Z[\alpha]$) $p$ in unramified in $E$, also in the Galois closure $R$ of $E$.\\
{\bf{3}}- $P_{23}=\sqrt{-23} \mathcal{O}_K$ is unramified in $R/K$: we have $T \mod 23=(X-3)(X-10)^2$ ; By Kummer-Dedekind $23\mathcal{O}_E=P_1P_2^2$ with $3=e_1f_1+e_2f_2$, so $e=1=f_1=1, e_2=2, f_2=1$.\\
$R/K$ is Galois; we have $P_{23}\mathcal{O}_R=(\mathfrak{Q}_1\ldots \mathfrak{Q}_g)^e, efg=3$ ; then $e=1$ : else $e=3$ ; let $\mathfrak{P}|P_{23}$ in $R$ :\\
$e_{R/\Q}(\mathfrak{P})=e_{R/K}(\mathfrak{P}).e_{K/\Q}(P_{23})=3\times 2=6$, and\\
$e_{R/\Q}(\mathfrak{P}) =e_{R/E}(\mathfrak{P}).e_{E/\Q}(P_1 or P_2)$=[1 or 2]$\times $  [$ \leq 3]$\\

\noindent
necessarily $e_{R/E}(\mathfrak{P})=2$, and $e_{E/\Q}(P_i)= 3 (i=1,2)$, but $e_{E/\Q}(P_i)=1$ or 2 : contradiction.\\
{\bf{4}}- $K$ being imaginary, $R/K$ is unramified at infinity : so $R/K$ is abelian unramified and\\
$[R:K]=3=h_K$ ; we conclude that $R$ is the Hilbert class field of $K$.\\
{\bf{5}}- Application : \\
\begin{itemize}
\item  if $p\neq 23$ with $r_p=1$ we have $T \mod p=(X-\alpha)(X-\beta)(X-\gamma) \in \F_p[X]$ :
Kummer-Dedekind shows that $p\mathcal{O}_E=P_1P_2P_3$ : $p$ is totally decomposed 
in $E$,
and idem in the Galois closure $R$ of $E$,  a fortiori in $K$ : $p\mathcal{O}_K=\mathfrak{P}_1\mathfrak{P}_2$, and $\mathfrak{P}_1, \mathfrak{P}_2$ are totally decomposed in the Hilbert class field $R$ ; by the Artin reciprocity law $\mathfrak{P}_1$ is a principal ideal :
$\mathfrak{P}_1=(x+y\sqrt{-23})\mathcal{O}_K$ ; norm : $p=x^2+23y^2$.\\
\item Reciprocally if $p=x^2+23y^2$ we have $p\mathcal{O}_K=(x+y\sqrt{-23})\mathcal{O}_K.
(x-y\sqrt{-23}\mathcal{O}_K)$ : $p$ is totally decomposed in $K$.  By Artin reciprocity law , $(x+y\sqrt{-23})\mathcal{O}_K$ being principal, we conclude it is totally decomposed in $R$, and also in $E$ :
$p\mathcal{O}_E=P_1P_2P_3$ and Kummer-Dedekind show that $r_p=1$. 
\end{itemize}
We have proved
\begin{theorem}
$r_p=1 \Longleftrightarrow p=x^2+23y^2$
\end{theorem}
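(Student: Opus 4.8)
The plan is to prove both implications simultaneously by translating the polynomial condition $r_p=1$ into a splitting condition on the prime $p$ in the relevant number fields, and then invoking the fact that $R$ is the Hilbert class field of $K=\Q(\sqrt{-23})$. First I would reduce $r_p=1$ to a statement about $E=\Q(\alpha)$. Since $Disc(T)=-23$ is squarefree one has $\mathcal{O}_E=\Z[\alpha]$, so for every $p\neq 23$ the Kummer--Dedekind theorem guarantees that the factorization of $p\mathcal{O}_E$ mirrors the factorization of $T\bmod p$ in $\F_p[X]$. Hence $r_p=1$, meaning $T\bmod p=(X-\alpha)(X-\beta)(X-\gamma)$ with three distinct roots in $\F_p$, is equivalent to $p$ being totally split in $E$.

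Next I would pass from $E$ to its Galois closure $R$. As $T$ is irreducible over $\Q$ with non-square discriminant, $\mathrm{Gal}(R/\Q)\cong S_3$; for such an $S_3$-cubic a prime is totally split in the degree-three subfield $E$ if and only if its Frobenius is trivial, equivalently if and only if it is totally split in $R$. Since $K\subset R$, total splitting in $R$ forces total splitting in $K$, so $p\mathcal{O}_K=\mathfrak{P}_1\mathfrak{P}_2$ and, by multiplicativity of $e,f,g$ through the tower, each $\mathfrak{P}_i$ is itself totally split in the extension $R/K$.

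The decisive step is class field theory. Granting (items 1--4 of the preceding discussion) that $R$ is the Hilbert class field of $K$ — which rests on $h_K=3$, on $[R:K]=3$, on the fact that a cyclic cubic extension is automatically abelian, and on $R/K$ being unramified at every finite prime and at infinity — the Artin reciprocity law tells us that a prime $\mathfrak{P}$ of $K$ is totally split in $R$ precisely when $\mathfrak{P}$ is principal. Chaining the equivalences, $r_p=1$ holds iff $p$ splits in $K$ with principal factors, say $\mathfrak{P}_1=(x+y\sqrt{-23})\mathcal{O}_K$; taking norms this reads exactly $p=x^2+23y^2$. The converse is the same chain read backwards: if $p=x^2+23y^2$ then $p\mathcal{O}_K$ factors into principal primes, which are therefore totally split in the Hilbert class field $R$, hence in $E$, and Kummer--Dedekind returns $r_p=1$. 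The ramified prime $p=23$ is checked directly, since $r_{23}=1$ and $23=0^2+23\cdot 1^2$.

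I expect the main obstacle to be the identification of $R$ with the Hilbert class field of $K$, and within that the verification that the prime $P_{23}=\sqrt{-23}\,\mathcal{O}_K$ lying over $23$ is unramified in $R/K$. Because $T\bmod 23=(X-3)(X-10)^2$ yields $23\mathcal{O}_E=P_1P_2^2$, one must track ramification indices through the tower $\Q\subset E,K\subset R$ and rule out $e_{R/K}(\mathfrak{P})=3$ by a multiplicativity argument, for any residual ramification of $R/K$ would destroy the Hilbert-class-field property on which the entire equivalence depends.
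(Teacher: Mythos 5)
Your proposal is correct and takes essentially the same route as the paper: Kummer--Dedekind with $\mathcal{O}_E=\Z[\alpha]$ (since $Disc(T)=-23$ is squarefree) to translate $r_p=1$ into total splitting in $E$ and hence in the Galois closure $R$, the identification of $R$ as the Hilbert class field of $K=\Q(\sqrt{-23})$ (including the delicate unramifiedness check at $P_{23}$, which you correctly flag as the key obstacle and which the paper settles by the same multiplicativity-of-ramification argument), and Artin reciprocity plus a norm computation to get $p=x^2+23y^2$, with the converse being the same chain reversed. Your explicit check of $p=23$ matches the paper's closing remark that this prime poses no problem for the assertion.
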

\noindent
Clearly $p=23$ is not a problem for the assertion!

\bigskip 

\section{Some identities for $(T_n)_{n\ge 0}$}
\subsection\\
For all $m$ $(T_{m+n})_{n \ge 0} \in \mathcal{T}_K$, so there exist $u,v,w \in K$ such that 
$T_{m+n}=uT_n+vT_{n-1}+wT_{n-2}$. Applying this for $n=0, n=1, n=2$, we obtain the system
$$
\left\{
\begin{array}{cccccl}
    &     & v  &    & =& T_m\\
 u & +  &  & w & =& T_{m+1}\\
 u& + & v &     & =& T_{m+2}
\end{array}
\right.
$$

Easy calculations give :
\begin{equation}
T_{m+n}=T_{m-1}.T_n +T_{m}.T_{n-1}+(T_{m+1}-T_{m-1}).T_{n-2}
\end{equation}\\

\noindent
As application let $m=n$ : 
$$T_{2n}=2T_{n-1}T_{n}+(T_{n+1}-T_{n-1})T_{n-2}$$
if $p |T_{n-2}$ and $p|T_{n-1}$ also $p|T_{2n}$. On can verify that $p=7|T_{13}$ and $p|T_{14}$, so $p|T_{30}$.

\subsection\\
The well-know formula related to the Fibonacci sequence $F_{n+1}F_{n-1}-F_n^2=(-1)^n$ (``Fibonacci-puzzle'') can be proved like this : one prove first that $\Phi^n=F_n\Phi+F_{n-1}$, and the analog with $\overline{\Phi}$ conjugate of $\Phi$ ; since $\Phi.\overline{\Phi}=-1$, one has $(\Phi.\overline{\Phi})^n=(-1)^n= (F_n\Phi+F_{n-1})(F_n\overline{\Phi}+F_{n-1})$ and the identity follows. We use here the same idea.\\
As $(\alpha^n)_{n\ge 0} \in \mathcal{T}_K$, there exist $u,v,w \in K$ such that 
$\alpha^n=uT_n+vT_{n-1}+wT_{n-2}$ : easy calculations give
\begin{equation}
\alpha^n=(\alpha^2-1)T_n+T_{n-1}+(1+\alpha-\alpha^2)T_{n-2}
\end{equation}
and the analog for $\beta^n, \gamma^n$. Write now $\alpha^n\beta^n\gamma^n=1$:\\

\bigskip
\bigskip
$[(\alpha^2-1)T_n+T_{n-1}+(1+\alpha-\alpha^2)T_{n-2}] \dots$\\

\hspace{1cm} $ \dots [(\beta^2-1)T_n+T_{n-1}+(1+\beta-\beta^2)T_{n-2}].
[(\gamma^2-1)T_n+T_{n-1}+(1+\gamma-\gamma^2)T_{n-2}]=1$
\\

The left member is a polynomial $P(T_n,T_{n-1},T_{n-2})=\sum_{(a,b,c)}c_{abc}T_n^aT_{n-1}^bT_{n-2}^c$ homogeneous of degree 3 : $a+b+c=3$, and the $c_{abc}$ are symmetric polynomials in $\alpha, \beta,\gamma$, so poynomials in the elementaries symmetric functions $s_1=\alpha +\beta +\gamma=0, s_2=\alpha \beta+\beta \gamma +\gamma \alpha=-1, s_3=\alpha \beta \gamma=1$. After long calculations we find

\begin{equation}
T_n^3+T_{n-1}^3+T_{n-2}^3-T_nT_{n-1}^2-T_n^2T_{n-2}+T_{n-1}^2T_{n-2}+2T_{n-1}T_{n-2}^2-3T_nT_{n-1}T_{n-2}=1
\end{equation}

\bigskip 

\section{Questions}We have many questions !\\
$Q_1-$ In the case $r_p=1$ it seems that $b=c=2a$ : is it true ?\\
$Q_2-$ In the case $r_p=2$ it seems that $b=c=(p+1)a$ : is it true ?\\
$Q_3-$ When $r_p=3$ : $a=b=c$ ; some examples show that $a=1+p+p^2$ : is it true ?\\
$Q_4-$ What is the exact arithmetical nature of $\omega_p$, of $A_p$ ?\\
$Q_5-$ Primitive divisors : the computations give many exceptions :
$$\; n=5, 7, 10, 11, 12, 13, 14, 16, 21, 23, 32, 33, 45, ...  \;$$ 
Are there finitely many ?\\
$Q_6-$ $T_n$ is prime for $$n=4, 5, 6, 8, 9, 15, 20, 31, 38, ...$$ \\
Are there infinitely many ?\\
$Q_7-$ We find the following square in $(T_n)_{n \ge 0}$ : $$T_{10}=9, T_{12}=16, T_{16}=49$$
Are there finitely many squares ? \\
Observe that $\sqrt{9}=3=T_6, \sqrt{16}=4=T_7, \sqrt{49}=7=T_9$ : is it a coincidence ?\\ 

Finally the author thanks F. Nuccio for his (im)pertinent galoisian observations !
\begin{center}
{\tiny $\clubsuit$}
\end{center}


\providecommand{\bysame}{\leavevmode\hbox to3em{\hrulefill}\thinspace}
\providecommand{\MR}{\relax\ifhmode\unskip\space\fi MR }
\providecommand{\MRhref}[2]{%
  \href{http://www.ams.org/mathscinet-getitem?mr=#1}{#2}
}
\providecommand{\href}[2]{#2}

\vspace{1cm}
\end{document}